\theoremstyle{plain}
\newtheorem{theorem}{Theorem}[section]
\newtheorem{lemma}[theorem]{Lemma}
\newtheorem{remark}[theorem]{Remark}
\newcommand{\dR}{\mathbb{R}}
\newcommand{\dZ}{\mathbb{Z}}
\let\altphi\phi
\let\phi\varphi
\let\varphi\altphi
\let\altphi\undefined
\begin{document}
	
	\title{Fukaya-Yamaguchi Conjecture in Dimension Four}
	
	\author{Elia Bru\`e, Aaron Naber, and Daniele Semola}
	
	\maketitle{}
	
	{\centering\footnotesize\it In honor of Gang Tian on his 65$^{th}$ Birthday.\par}
	
	\begin{abstract}
	Fukaya and Yamaguchi \cite{FukayaYamaguchi} conjectured that if $M^n$ is a manifold with nonnegative sectional curvature, then the fundamental group is uniformly virtually abelian.  In this short note we observe that the conjecture holds in dimensions up to four.
	\end{abstract}
	
\section{Statement of the main result}

\begin{theorem}\label{t:FY_4d}
	Let $M^n$ be a smooth manifold with nonnegative sectional curvature and $n\leq 4$.  Then there exists an abelian subgroup $A\leq \pi_1(M)$ of the fundamental group with universally bounded index $[\pi_1(M):A]\leq C(n)$. 
\end{theorem}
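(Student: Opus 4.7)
The plan is an induction on $n$ that reduces the noncompact case to closed manifolds of strictly smaller dimension via the Cheeger-Gromoll soul theorem. If $M^n$ is open with $\sec \geq 0$, the soul theorem furnishes a closed totally geodesic $\Sigma \subset M$ with $\dim \Sigma < n$, $\sec_\Sigma \geq 0$, and $\pi_1(M) \cong \pi_1(\Sigma)$, reducing to a closed case of lower dimension. The base cases $n \leq 3$ follow from classical classification results: surfaces for $n \leq 2$, and Hamilton--Perelman for $n = 3$, where the possible $\pi_1$'s of closed 3-manifolds with $\sec \geq 0$ are uniformly virtually abelian via Jordan's theorem for finite subgroups of compact Lie groups.

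For closed $M^4$ with $\sec \geq 0$, lift to the universal cover and apply the Cheeger-Gromoll de Rham decomposition: $\tilde M \cong \mathbb R^k \times C$ isometrically, where $C$ is closed simply connected of dimension $4-k$ with $\sec_C \geq 0$. (Compactness of $C$ is forced by cocompactness of the $\pi_1(M)$-action: any infinite order deck transformation produces a line in $\tilde M$ via its minimal axis, so iteration of the splitting must terminate in a compact factor.) The isometry group splits as $\mathrm{Isom}(\mathbb R^k) \times \mathrm{Isom}(C)$, the latter a compact Lie group of dimension at most $10$, and $\pi_1(M)$ embeds as a discrete cocompact subgroup. Projecting to $\mathrm{Isom}(\mathbb R^k)$ and invoking Bieberbach's theorem, one passes to a subgroup $\pi_1' \leq \pi_1(M)$ of index $\leq C(n)$ fitting into an exact sequence $1 \to K \to \pi_1' \to \mathbb Z^k \to 1$, where $K$ is a finite subgroup of $\mathrm{Isom}(C)$.

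The remaining analysis is a case split on $k$. The case $k = 4$ is flat and finished by Bieberbach, while $k = 3$ does not arise (no closed simply connected 1-manifold). For $k = 0$, $\tilde M = C$ is closed simply connected with $\chi(\tilde M) \geq 2$ coming from $b_0 = b_4 = 1$ and $b_1 = b_3 = 0$; Gromov's Betti number bound for $\sec \geq 0$ gives $\chi(\tilde M) \leq C(4)$, and since $\chi(M) = \chi(\tilde M)/|\pi_1(M)|$ is a positive integer we conclude $|\pi_1(M)| \leq C(4)$ directly. For $k \in \{1, 2\}$, $C$ is diffeomorphic to $S^3$ (by Perelman) or $S^2$, and Jordan's theorem supplies a normal abelian subgroup $A \leq K$ of index bounded in terms of $\dim \mathrm{Isom}(C)$. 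The conjugation action of $\mathbb Z^k = \pi_1'/K$ on $A$ is induced by elements of $\mathrm{Isom}(C)$ normalizing $A$; since $A$ lies in a maximal torus of $\mathrm{Isom}(C)$, this action factors through a Weyl-type quotient whose order is controlled by $\dim \mathrm{Isom}(C) \leq 10$. A subgroup of $\mathbb Z^k$ of uniformly bounded index centralizes $A$, and its preimage in $\pi_1'$ is an abelian subgroup $A \times \mathbb Z^k$ of $\pi_1(M)$ of index $\leq C(4)$.

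The main obstacle lies in the mixed cases $k \in \{1, 2\}$: although $|K|$ itself can be unbounded, the conjugation action of $\mathbb Z^k$ on $K$ must have universally bounded order. This is controlled by the key geometric observation that the action is realized inside a compact Lie group of dimension bounded by $n(n+1)/2$, forcing it through a Weyl-type quotient of bounded size; the dimension-4 restriction also enters critically in the $k = 0$ case through the inequality $\chi \geq 2$ for closed simply connected 4-manifolds.
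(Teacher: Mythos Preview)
Your overall architecture matches the paper's (soul theorem, splitting $\tilde M=\mathbb{R}^k\times C$, Bieberbach on the Euclidean factor), and the cases $k\in\{0,3,4\}$ are handled correctly; your Euler-characteristic bound for $k=0$ is essentially the observation the paper records for free actions. The genuine gap is in the mixed cases $k\in\{1,2\}$. The decisive error is the final step: the preimage in $\pi_1'$ of a finite-index sublattice $L\le\mathbb{Z}^k$ is \emph{not} ``an abelian subgroup $A\times\mathbb{Z}^k$''. That preimage is an extension $1\to K\to\pi_1''\to L\to 1$ which still contains all of the possibly nonabelian $K$. Extracting an abelian subgroup of bounded index from $\pi_1''$ is exactly the crux, and you have not done it: you would need lifts $g_1,\dots,g_k\in\pi_1''$ of generators of $L$ that commute with one another, and nothing you have said forces the commutators $[g_i,g_j]\in K$ to vanish or even to lie in $A$. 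Two subsidiary problems compound this: Jordan only gives $A\trianglelefteq K$, not $A\trianglelefteq\pi_1'$, so ``the conjugation action of $\mathbb{Z}^k$ on $A$'' is not well-defined as written (and for it to descend from $\pi_1'$ to $\pi_1'/K$ you would further need $A\le Z(K)$); and finite abelian subgroups of compact Lie groups need not lie in a maximal torus (the Klein four group in $SO(3)$, or $(\mathbb{Z}_2)^3\le SO(4)$), so the ``Weyl-type quotient'' bound is unjustified.

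The paper closes this gap by a different mechanism. It passes to the \emph{closure} $\overline{\Gamma}'_N$ of the projection of $\Gamma'$ to $\text{Isom}(N)$. Because $[\Gamma',\Gamma']\le\Gamma^0_N$ with $\Gamma^0_N$ finite, continuity forces $[\overline{\Gamma}'_N,\overline{\Gamma}'_N]\le\Gamma^0_N$, so the identity component of this compact Lie group is a torus $T^\ell$. One then exhibits a finite normal subgroup $\Sigma\le\overline{\Gamma}'_N$ meeting every component, applies the Jordan-type theorem of Mundet i Riera to the \emph{effective action of $\Sigma$ on $N$} (using the topology of $N$, not the Lie-group structure of $\text{Isom}(N)$), and observes that $T^\ell$ commutes with $\Sigma$ by connectedness. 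Pulling the resulting abelian $\bar A\le\overline{\Gamma}'_N$ back to $\Gamma'$ yields the desired subgroup. This closure-and-torus argument is precisely what replaces your missing control over the commutators of lifts of $L$.
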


It is interesting to note that the above fails if one only assumes $\text{Ric}\geq 0$, see \cite{Wei_RicciNilpotent} and more recently \cite{BNS_Nilpotent} for examples in the closed case. In the case of $\text{Ric}\geq 0$ the fundamental group may not even be finitely generated, see for instance \cite{BNS_Milnor,BNS_Milnor6}.  However, it is unknown if the infinitely generated component must be abelian; for instance, it is unknown if there exists a normal abelian subgroup $A\leq \pi_1(M)$ such that the quotient $\pi_1(M)/A$ is finitely generated? \\

\section{Proof of the main result}

By the Cheeger-Gromoll soul theorem \cite{CheegerGromollsoul}, $M^n$ deformation retracts onto a compact totally geodesic submanifold. In particular, it is homotopically equivalent to a compact manifold with nonnegative sectional curvature of dimension $\leq n$. Hence we can assume without loss of generality that $M^n$ is compact.
\medskip

We break the proof down into two basic cases, which is whether or not the universal cover $\tilde{M}^n$ is compact or noncompact.  Let us first deal with the case when $\tilde M^n$ is compact, where in fact we will prove a slightly more general result about effective actions.  This generalization will prove useful in the noncompact context:

\begin{lemma}\label{l:compact}
	Let $(\tilde M^n,g)$ be a simply connected manifold with $n\leq 4$, $\sec\geq -1$ and $\mathrm{diam} \leq D$.  Then any finite group $\Gamma$ acting smoothly and effectively on $\tilde M$ admits an abelian subgroup $A\leq \Gamma$ which is generated by at most $C(n,D)$ elements and whose index is uniformly bounded $[\Gamma:A]\leq C(n,D)$ .
\end{lemma}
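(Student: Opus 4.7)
The plan is to argue by contradiction using Fukaya's equivariant Gromov--Hausdorff compactness. If no such $C(n,D)$ existed, for each $i$ I would choose a triple $(\tilde M_i, g_i, \Gamma_i)$ satisfying the hypotheses but with every abelian subgroup of $\Gamma_i$ having either index greater than $i$ or requiring more than $i$ generators. Since the $\Gamma_i$-action is a priori only smooth, I first replace $g_i$ by a $\Gamma_i$-invariant metric retaining a (possibly degraded but dimensional) Alexandrov-type lower curvature bound, so that the uniform bounds $\sec \geq -c(n)$ and $\mathrm{diam} \leq D$ provide equivariant Gromov--Hausdorff precompactness. Passing to a subsequential limit produces a triple $(X_\infty, d_\infty, G_\infty)$ in which $X_\infty$ is a compact Alexandrov space of dimension $m \leq n \leq 4$ with uniform lower curvature and diameter bounds, and $G_\infty \subseteq \mathrm{Isom}(X_\infty)$ is a closed subgroup.

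The key structural input is then that the isometry group of a compact Alexandrov space is a Lie group (a theorem going back to Fukaya--Yamaguchi), so $G_\infty$ is a compact Lie group, and effectivity on a space of dimension $\leq 4$ forces $\dim G_\infty$ to be bounded by a dimensional constant. Compact Lie groups of bounded dimension satisfy a uniform quantitative Jordan property: there exists $J = J(n)$ such that every finite subgroup contains an abelian subgroup of index $\leq J$ and generated by at most $J$ elements (the generator bound reducing to the fact that a finite abelian subgroup of a compact Lie group sits in a finite extension of a torus whose rank is bounded by $\dim G_\infty$). Applied to the image of $\Gamma_i$ in $G_\infty$ for $i$ large, this produces the desired abelian subgroup up to a ``collapsing'' correction.

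The main obstacle is controlling that collapsing correction, namely the subgroup $K_i \leq \Gamma_i$ of elements whose displacement on $\tilde M_i$ tends to zero: these become invisible in the equivariant limit and are not detected by Jordan on $G_\infty$. Here I would invoke a Margulis--Gromov short--generators argument, adapted to the sectional curvature lower bound, to extract a subgroup of $K_i$ of dimensionally bounded index which is nilpotent with step and rank controlled by $n$. The hypotheses that $\tilde M_i$ is simply connected and $n \leq 4$ would then be used to upgrade virtually nilpotent to virtually abelian with uniform index and generator bounds; this is the step genuinely relying on both low dimension and simple connectivity, and it mirrors the reason the Fukaya--Yamaguchi conjecture becomes subtle in higher dimensions. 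Combining the Jordan analysis on $G_\infty$ with the control on $K_i$ produces, for $i$ large, an abelian subgroup of $\Gamma_i$ with both index and generator count bounded by $C(n,D)$, contradicting the choice of the sequence.
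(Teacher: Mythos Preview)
Your proposal has a genuine gap at the very first step. The action of $\Gamma$ on $\tilde M$ is only assumed to be \emph{smooth}, not isometric, and averaging $g$ over $\Gamma$ to obtain an invariant metric does \emph{not} preserve any lower sectional (or Alexandrov) curvature bound: the curvature of the averaged metric depends on second derivatives of the diffeomorphisms in $\Gamma$, which are in no way controlled by $n$ or $D$. There is no ``degraded but dimensional'' constant here. Without a $\Gamma_i$-invariant metric with uniform geometry, equivariant Gromov--Hausdorff precompactness is unavailable and the entire limiting scheme collapses. A secondary issue is that your treatment of the collapsing kernel $K_i$ is circular: the passage from ``virtually nilpotent with controlled step and rank'' to ``virtually abelian with uniform index'' is exactly the content of the conjecture you are trying to establish, and you give no mechanism by which simple connectivity and $n\leq 4$ would force it.

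The paper's argument avoids both difficulties by never asking the action to be isometric. It observes that a simply connected closed manifold of dimension $\leq 4$ is either an integral homology sphere ($n=2,3$) or has $\chi(M^4)=2+b_2>0$, and then invokes Mundet i Riera's theorem that $\mathrm{Diff}(M)$ is Jordan for such $M$, with a constant depending only on $n$ and $\dim H^*(M,\mathbb Z)$. The geometric hypotheses $\sec\geq -1$, $\mathrm{diam}\leq D$ enter only through Gromov's Betti number estimate, which bounds $\dim H^*(M,\mathbb Z)$ by $C(n,D)$. Thus the curvature and diameter are used purely to control the \emph{topology} of $\tilde M$, after which the Jordan property for the full diffeomorphism group does the work; no invariant metric, no convergence theory, and no Margulis--Gromov lemma are needed.
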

\begin{remark}
	Note that if $\sec\geq 0$ and $\tilde M^n$ is compact, then we may rescale in order to assume $\mathrm{diam}(\tilde M^n)\leq 1$ .  In particular, we have that $A$ is generated by at most $C(n)$ generators with $[\Gamma:A]\leq C(n)$ .
\end{remark}
\begin{proof}[Proof of Lemma \ref{l:compact}]

Let us first observe that in the case $n=2$ we have that $M^2=S^2$ and in the case $n=3$ we have that $M^3=S^3$ as they are simply connected closed manifolds\footnote{It will be enough that that $M^3$ is an integral homology sphere, so we do not really need to appeal to Perelman's proof of the Poincare conjecture.}.  In the case $n=4$, let us recall that the Euler characteristic of a simply connected four manifold $M^4$ satisfies
\begin{align}
	\chi(M^4) = 2+b_2\geq 2>0\, .
\end{align} 

In particular, a simply connected four manifold has positive Euler characteristic.  Let us now appeal to the results of Mundet i Riera \cite{Mundet19}.  In the cases where $M$ is either an integral homotopy sphere or has nonzero Euler characteristic, we have that $\text{Diff}(M)$ is a Jordan space.  More precisely and effectively, by \cite[Theorem 1.2]{Mundet19} we have that if $\Gamma$ is any smooth effective action on $M$ then there exists an abelian group $A\leq \Gamma$ and $C$ depending only on the dimensions of $M$ and $H^*(M,\dZ)$ such that: 
\begin{itemize}
\item[i)] $A$ is generated by at most $C$ elements; 
\item[ii)] $[\Gamma: A]\leq C$.  
\end{itemize}
If we combine with Gromov's betti number estimates \cite[Theorem 0.2B]{Gromovbetti}, which bounds for us the dimension of $H^*(M,\dZ)$, this finishes the proof of Lemma \ref{l:compact}.

Let us make the observation that in the case that $\Gamma$ is an oriented and free action on $M^4$, the result is even easier as one gets directly the order bound on $\Gamma$:
\begin{align}
	2|\Gamma| \leq \chi(M^4/\Gamma)|\Gamma| = \chi(M^4) = 2+b_2\, .
\end{align}
In the case $n=2$ or $n=3$ we may also have instead appealed to \cite[Theorem E]{DinkelbachLeeb} in order to make the required conclusions.
\end{proof}

With the compact case in hand let us now deal with the noncompact case and finish the proof of Theorem \ref{t:FY_4d}:

\begin{proof}[Proof of Theorem \ref{t:FY_4d}]
	We can apply Lemma \ref{l:compact} and assume that the universal cover $\tilde M^n$ is noncompact.  \\
	
{\bf Claim 1: $\tilde M^n = \dR^k\times N$ where $N$ is compact.}\\  

We first follow \cite{CheegerGromollRic} and apply the Toponogov splitting theorem \cite{Toponogov} in order to understand the structure of $\tilde M^n$.  So let us write the universal cover as an isometric splitting $\tilde M = \dR^k\times N$, where by assumption $k$ is maximal so that $\tilde N$ does not isometrically split any Euclidean factors. Note that in this context each isometry $\gamma\in \Gamma\equiv \pi_1(M)\leq \text{Isom}(\tilde M)$ splits $\gamma=\gamma_k\times \gamma_N$ where $\gamma_k\in \text{Isom}(\dR^k)$ and $\gamma_N\in \text{Isom}(N)$ .  The mappings $\rho_k: \text{Isom}(\tilde M)\to \text{Isom}(\dR^k)$ and $\rho_N: \text{Isom}(\tilde M)\to \text{Isom}(N)$ are clearly homomorphisms.
	
	So let us prove that $N$ is compact.  To prove the claim let $\hat M\subseteq \tilde M$ be a fundamental domain.  As $M$ is compact we have also that $\hat M$ is compactly supported.  Now if $N$ is not compact then we can find a sequence $x_j=(0,y_j)\in \dR^k\times N$ with $d(x_0,x_j)=d(y_0,y_j) = 2r_j\to \infty$ .  Let $x_j(t)=(0,y_j(t)):[-r_j,r_j]\to \tilde M$ be a geodesic between $x_0$ and $x_j$.  By the definition of the fundamental domain, we can act on $x_j(t)$ by a deck transformation $\gamma_j = \gamma^k_j\times \gamma^N_j\in \pi_1(M)$ so that the resulting minimizing geodesic $\sigma_j(t) = \gamma_j\cdot x_j(t)=(\gamma^k_j(0),\gamma^N_j\circ x_j(t))=(\sigma^k_j, \sigma^N_j(t)):[-r_j,r_j]\to \tilde M$ satisfies $\sigma_j(0)\in \hat M$, where $\sigma^k_j\in \dR^k$ and $\sigma^N_j(t)$ is a geodesic in $\tilde N$. As $\sigma_j(0)$ is compactly supported we can pass to a subsequential limit $\sigma_j\to \sigma = (\sigma^k,\sigma^N(t)):(-\infty,\infty)\to \tilde M$ to get a line $\sigma^N(t)$ in $\tilde N$.  We can now apply the Toponogov splitting \cite{Toponogov} to see that we have the isometric splitting $N = \dR\times N_1$, which is a contradiction to our assumption that $N$ did not split any Euclidean factors.  Thus $N$ is compact and this proves Claim 1. \\
	
	For the next step in the proof let us begin by considering the two groups $\Gamma_k = \rho_k(\Gamma)=\rho_k(\pi_1(M))\leq \text{Isom}(\dR^k)$ and $\Gamma^0_N=\ker \rho_k\leq \Gamma$.  Note that $\Gamma^0_N$ is normal in $\Gamma$, and by observing that $\Gamma^0_N$ fixes the $\dR^k$ factor we can naturally embed it $\Gamma^0_N\leq \text{Isom}(N)$. Note that as $\Gamma$ is a discrete cocompact action on $\tilde M$ and $N$ is compact, we have that $\Gamma_k$ is a discrete cocompact action on $\dR^k$.  It follows from the Bieberbach theorem that there exists an abelian group, in fact a lattice, $A_k\leq \Gamma_k$ with at most $K(k)$ generators for which $[\Gamma_k:A_k]\leq C(k)$.  
	
	
%
	
	We set $\Gamma'=\rho^{-1}_k(A_k)\leq \Gamma$, which we observe is a $C(k)$-finite index subgroup with normal subgroup $\Gamma^0_N\leq \Gamma'$ and at most $K=K(n)$ generators.  We have the short exact sequence
	
\begin{align}
	0\rightarrow \Gamma^0_N \rightarrow \Gamma'\rightarrow A_k\to 0\,\, .
\end{align}

As $A_k$ is abelian, we have that the commutator subgroup must satisfy $[\Gamma',\Gamma']\leq \Gamma^0_N$. Let us now define $\Gamma'_N:=\rho_N(\Gamma')\leq \text{Isom}(N)$ to be the image group with $\overline{\Gamma}'_N\leq \text{Isom}(N)$ its closure.  Note that $\overline{\Gamma}'_N$ is a compact Lie group. \\ 

{\bf Claim 2: The connected component of the identity of $\overline{\Gamma}'_N\leq \text{Isom}(N)$ is a torus $T^\ell\leq \overline{\Gamma}'_N$, and there exists a finite normal subgroup $\Sigma\leq \overline{\Gamma}'_N$ such that $T^\ell$ and $\Sigma$ generate $\overline \Gamma'_N$}.\\

To prove the claim let us first observe that $[\Gamma'_N,\Gamma'_N]\leq \Gamma^0_N\leq \text{Isom}(N)$, and hence as the commutators are continuous we have that $[\overline\Gamma'_N,\overline\Gamma'_N]\leq \Gamma^0_N\leq \text{Isom}(N)$.   In particular, $\overline\Gamma'_N/\Gamma^0_N$ is an abelian compact Lie group.  As it is abelian we have that its connected component is a torus and we can write $\overline\Gamma'_N/\Gamma^0_N = \hat T^\ell\times \hat \Gamma_A$, where $\hat\Gamma_A$ is itself a finite abelian group; the finiteness of $\hat\Gamma_A$ is due to compactness of $\text{Isom}(N)$.  The point to emphasize is that there exists in $\overline\Gamma'_N/\Gamma^0_N$ a finite abelian group $\{e\}\times\hat\Gamma_A$ with an element in each connected component of $\overline\Gamma'_N/\Gamma^0_N$.  By lifting $\{e\}\times\hat\Gamma_A$ to $\overline{\Gamma}'_N$ under the quotient map by $\Gamma^0_N$, we have the (normal, because $\hat\Gamma_A$ is normal) finite group $\Sigma  \leq \overline\Gamma'_N$, which satisfies the short exact sequence 

\begin{align}
	&0\rightarrow \Gamma^0_N \rightarrow \Sigma\rightarrow \hat\Gamma_A\to 0\,\, .
\end{align}
As $\hat\Gamma_A$ contains an element in each connected component of $\overline\Gamma'_N/\Gamma^0_N$, we have that $\Sigma$ contains an element in each connected component of $\overline\Gamma'_N$. As the identity component of $\overline\Gamma'_N/\Gamma^0_N$ is a torus $\hat T^\ell$, we necessarily have that the identity component of $\overline\Gamma'_N$ is a torus $T^\ell$, as claimed.\\

We have two final properties to check before we can finish the proof of the Theorem.  The first is to see that $\Sigma$ is virtually abelian, the second is to see that $T^\ell$ and $\Sigma$ commute:\\

{\bf Claim 3:  There exists an abelian subgroup $A_\Sigma\leq \Sigma$ with $K(n)$ generators and finite index $[\Sigma:A_\Sigma]\leq C(n)$}.\\

Observe that $\Sigma\leq \overline{\Gamma}'_N\leq \text{Isom}(N)$ is in particular a finite group with an effective action on $N$, and hence we may apply Lemma \ref{l:compact} to conclude the existence of $A_\Sigma\leq \Sigma$, as claimed.\\

{\bf Claim 4:  The subgroups $T^\ell$ and $\Sigma\leq \overline\Gamma'_N\leq \text{Isom}(N)$ commute.}\\

As $\Sigma$ is normal in $\overline\Gamma'_N$, we have for each $t\in T^\ell$ and $\sigma\in \Sigma$ that $t\sigma t^{-1}\in \Sigma$. By continuity considerations, as $\Sigma$ is finite and $T^\ell$ is connected, we then immediately get that $t\sigma t^{-1}=\sigma$. In particular, $T^\ell_N$ and $\Sigma$ commute, as claimed. \\

Let us now finish the proof of the Theorem.  Let $\bar A\leq \overline\Gamma'_N$ be the closed group generated by $T^\ell$ and $A_\Sigma\leq \Sigma$. Note by Claim 3 that $[\overline\Gamma'_N:\bar A]\leq C(n)$, and as $A_\Sigma$ and $T^\ell$ commute we have that $\bar A$ is abelian.  Let us define the homomorphism
\begin{align}
	\rho:\Gamma'\stackrel{\rho_N}{\longrightarrow}\Gamma'_N\to \overline\Gamma'_N\, ,
\end{align}
and then define
\begin{align}
	A:=\rho^{-1}(\bar A)\leq \Gamma'\, .
\end{align}
Note that $A$ is abelian as we can identify $A\leq \text{Isom}(\dR^k)\times \text{Isom}(N)$ and its projections to each factor are abelian.  Further, it follows that $[\Gamma': A]=[\overline\Gamma'_N:\bar A]\leq C(n)$.  When we combine this with the previous estimate $[\Gamma:\Gamma']\leq C(n)$ we get $[\Gamma: A]\leq C(n)$, which finishes the proof of the Theorem.
\end{proof}

\begin{remark}
In \cite[Section 6]{KPTAnn} the authors discuss an alternative strategy, due to Wilking, to perform the reduction to compact universal covers in the context of the Fukaya-Yamaguchi conjecture. Their approach relies on \cite[Corollary 6.3]{Wilking00}. 
\end{remark}

\end{document}